\newcommand{\ncom}{\newcommand}
\ncom{\beqn}{\begin{eqnarray*}}
	\ncom{\eeqn}{\end{eqnarray*}}
\ncom{\beq}{\begin{eqnarray}}
	\ncom{\eeq}{\end{eqnarray}}
\ncom{\cal}{\mathcal}
\ncom{\eop}{\hfill{{\rule{2.5mm}{2.5mm}}}}
\ncom{\eoe}{\hfill{{\rule{1.5mm}{1.5mm}}}}
\ncom{\eof}{\hfill{{\rule{1.5mm}{1.5mm}}}}
\ncom{\hone}{\mbox{\hspace{1em}}}
\ncom{\htwo}{\mbox{\hspace{2em}}}
\ncom{\hthree}{\mbox{\hspace{3em}}}
\ncom{\hfour}{\mbox{\hspace{4em}}}
\ncom{\hsev}{\mbox{\hspace{7em}}}
\ncom{\vone}{\vskip 2ex}
\ncom{\vtwo}{\vskip 4ex}
\ncom{\vonee}{\vskip 1.5ex}
\ncom{\vthree}{\vskip 6ex}
\ncom{\vfour}{\vspace*{8ex}}
\ncom{\norm}{\|\;\;\|}
\ncom{\integ}[4]{\int_{#1}^{#2}\,{#3}\,d{#4}}
\ncom{\inp}[2]{\langle{#1},\,{#2} \rangle}
\ncom{\Inp}[2]{\Langle{#1},\,{#2} \Langle}
\ncom{\vspan}[1]{{{\rm\,span}\#1 \}}}
\ncom{\dm}[1]{\displaystyle {#1}}
\newtheorem{theorem}{\bf Theorem}[section]
\newtheorem{proposition}[theorem]{\bf Proposition}
\newtheorem{corollary}[theorem]{\bf Corollary}
\newtheoremstyle
{remarkstyle}
{}
{11pt}
{}
{}
{\bfseries}
{:}
{     }
{\thmname{#1} \thmnumber{#2} }
\theoremstyle{remarkstyle}
\newtheorem{example}[theorem]{\bf Example}
\def \D{\mathbb{D}}
\def \C{\mathbb{C}}
\def \T{\mathbb{T}}
\def \h{\mathbb{H}}
\def \H{\mathcal{H}}
\def \re{\mathrm{Re}}
\begin{document}

	\title[CH operators of finite rank \& de Branges-Rovnyak spaces]{Completely hyperexpansive operators with finite rank defect operator and de Branges-Rovnyak spaces}
	
	\author[S. A. Joshi]{Saee A. Joshi}
	\address{Ramnarain Ruia Autonomous College\\
		Mumbai- 400019, India}
	\email{saayalee@gmail.com}
	
	\author[V. M. Sholapurkar]{Vinayak. M. Sholapurkar}
	\address{ Bhaskaracharya Pratishthana\\
		Pune- 411004, India}
	\email{vmshola@gmail.com}
	
	\date{}
	
	\begin{abstract}
		The process of identifying a Dirichlet-type space $D(\mu)$ for a positive, Borel measure $\mu$, supported on the unit circle $\mathbb T,$  with a de Branges-Rovnyak space was initiated by Sarason \cite{sar}. A characterization of the symbol for a de Branges-Rovnyak spaces for which the shift operator is a $2$-isometry, was provided in \cite{kz}. In this paper, capitalizing on the Aleman's model \cite{ale} for the cyclic, analytic, completely hyperexpansive operators, we provide a characterization of  cyclic, analytic, completely hyperexpansive operator with finite rank defect operator in terms of the symbol for a  de Branges-Rovnyak space.  
	\end{abstract}
	\subjclass[2020]{Primary 47B20,  47B32; Secondary 47A67, 46E22}
	\keywords{de Branges-Rovnyak space, Dirichlet-type space, completely hyperexpansive operators}
	\maketitle
	\section{Introduction and Preliminaries}
	It is well known that the class of $2$-isometric operators  is a subclass of the class of  completely hyperexpansive operators. A theme of the present paper hinges around extending the results in \cite{kz} for $2$-isometric operators to completely hyperexpansive operators, especially in the case when the rank of the defect operators is finite. 
	
	In this paper, we describe a model for a cyclic, analytic, completely hyperexpansive operator $T$, if the defect operator  $\Delta_{T}= T^*T-I$ is of finite rank. A special case of such a model for $2$-isometric operators, has been obtained in \cite{sc}. This particular case has a bearing on the classical model  for a cyclic, analytic, $2$-isometric operator described by Richter (see \cite{rich}). Here, we capitalize on the classical model for a cyclic, analytic, completely hyperexpansive operator given by Aleman (see \cite{ale}) and impose an additional condition that the defect operator is of finite rank. In both the classical representations, the corresponding operator is shown to be unitarily equivalent to the multiplication operator $M_z$ on a Dirichlet space $D(\mu)$ where the measure is supported on the closed unit disk. In particular, if the operator is $2$-isometric, then the support of $\mu$ lies inside the unit circle.  In the circumstances where the defect operator is of finite rank, the de Branges-Rovnyak spaces make their appearance. Indeed, in a series of papers \cites{sc, cgr, kz, sar}, the authors have established interesting connections between Dirichlet spaces and de Branges-Rovnyak spaces. The most pertinent connections in the context of present paper are \cite[Theorem~1 and Corollary~3]{kz}. We record these results for a ready reference. \\
	\begin{theorem}{\label{t1.1}}
		Suppose that $b$ is non-extreme in the unit ball of $\h^\infty$. Then the restriction of the shift operator $S|_{\H(b)}$ is concave if and only if $$b(z)=\frac{c+\gamma z}{1-\beta z} , z\in\D \ \mbox{where} \ c,\gamma,\beta\in\C \ \mbox{with} \ |\beta|<1.$$ Furthermore, $S|_{\H(b)}$ is $2$-isometry if and only if $b(z)=\frac{c+\gamma z}{1-\beta z} , z\in\D$ where $c,\gamma,\beta\in\C$ are such that $1+|\beta|^2-|c|^2-|\gamma|^2=|\beta+\overline{c}\gamma|$. 
	\end{theorem}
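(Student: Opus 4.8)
The plan is to reduce both assertions to a single eigenvalue condition for the backward shift on $\H(b)$, using Sarason's functional model for non-extreme symbols. Recall that for non-extreme $b$ there is an outer function $a$ (the Pythagorean mate, $a(0)>0$, $|a|^2+|b|^2=1$ a.e.\ on $\T$) such that $f\in\H(b)$ if and only if there is a unique $f^+\in\h^2$ with $T_{\bar b}f=T_{\bar a}f^+$, in which case $\|f\|_b^2=\|f\|_2^2+\|f^+\|_2^2$. Writing $S=M_z$ for the shift on $\H(b)$, the first step is the one-step defect: expanding the Toeplitz operators $T_{\bar b}(zf)$ and $T_{\bar a}(zf^+)$ and isolating the lost constant shows $(zf)^+=zf^++\kappa$ with $\kappa=\bigl(\langle f,\tfrac{b-b(0)}{z}\rangle_2-\langle f^+,\tfrac{a-a(0)}{z}\rangle_2\bigr)/\overline{a(0)}$. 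The key identity $(\tfrac{b-b(0)}{z})^+=-\tfrac{a-a(0)}{z}$, which follows on $\T$ from $|b|^2=1-|a|^2$, lets one recognize the numerator of $\kappa$ as an $\H(b)$-inner product, and hence
\[
\|Sf\|_b^2-\|f\|_b^2=|\kappa|^2=\frac{1}{|a(0)|^2}\,\bigl|\langle f,v\rangle_b\bigr|^2,\qquad v:=\frac{b-b(0)}{z}.
\]
In other words the defect operator $S^*S-I=\tfrac{1}{|a(0)|^2}\,v\otimes v$ (where $(v\otimes v)f=\langle f,v\rangle_b\,v$) is always rank one for non-extreme $b$.

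Next I would pass to the second difference. Since $S^*(S^*S-I)S=\tfrac{1}{|a(0)|^2}(S^*v)\otimes(S^*v)$, one gets
\[
\|S^2f\|_b^2-2\|Sf\|_b^2+\|f\|_b^2=\frac{1}{|a(0)|^2}\Bigl(|\langle f,S^*v\rangle_b|^2-|\langle f,v\rangle_b|^2\Bigr).
\]
The elementary comparison lemma (namely $|\langle f,u\rangle|\le|\langle f,v\rangle|$ for all $f$ if and only if $u=\lambda v$ with $|\lambda|\le1$, with equality throughout exactly when $|\lambda|=1$) then yields the clean reduction: $S|_{\H(b)}$ is concave precisely when $S^*v=\lambda v$ for some $|\lambda|\le1$, and is a $2$-isometry precisely when in addition $|\lambda|=1$. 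Everything now hinges on deciding when the explicit vector $v=(b-b(0))/z$ is an eigenvector of the $\H(b)$-backward shift, and on the modulus of the eigenvalue.

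For this I would use that the interior point spectrum of $S^*$ is carried by the reproducing kernels, $S^*k_w^b=\bar w\,k_w^b$ with one-dimensional eigenspaces, so that $S^*v=\lambda v$ forces $v=\text{const}\cdot k_w^b$ with $w=\bar\lambda\in\overline{\D}$ (the case $|\lambda|=1$ being handled by the corresponding boundary kernel). Solving $(b-b(0))/z=c\,k_w^b$ for $b$ is then a direct manipulation: clearing denominators gives $b(z)\bigl(1-(\bar w-c\overline{b(w)})z\bigr)=b(0)+(c-\bar w b(0))z$, so $b(z)=\dfrac{c+\gamma z}{1-\beta z}$ with $\beta=\bar w-c\overline{b(w)}$, where $|\beta|<1$ is forced by $b\in\h^\infty$. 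Conversely, for such a $b$ one finds $v=(\gamma+c\beta)/(1-\beta z)$ and, using the simplification $1-\overline{b(w)}b(z)=(1-\overline{b(w)}b(0))(1-\bar w z)/(1-\beta z)$, checks that $v$ is proportional to $k_w^b$ for the $w$ determined by the consistency relation $\bar w=(\beta+\overline{b(w)}\gamma)/(1-\overline{b(w)}c)$; membership of $b$ in the ball forces $|w|\le1$, whence $|\lambda|=|w|\le1$ and concavity. This settles the first equivalence.

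For the $2$-isometry statement I would impose $|\lambda|=1$, i.e.\ $w\in\T$. Computing the eigenvalue via $\lambda=\overline{\langle Sv,v\rangle_b}/\|v\|_b^2$ together with $Sv=b-b(0)$ reduces $|\lambda|=1$ to an identity among $c=b(0)$, $\gamma$ and $\beta$, once the inner products $\|v\|_b^2$ and $\langle 1,v\rangle_b$ are evaluated. The main obstacle is precisely this last computation: it requires the explicit Pythagorean mate $a$ of the rational symbol, obtained from the outer spectral factorization of $1-|b|^2$ on $\T$ (itself a degree-one rational), after which $a(0)$ and $a'(0)$ enter the inner products. Carrying this out and simplifying should collapse $|w|=1$ to $1+|\beta|^2-|c|^2-|\gamma|^2=|\beta+\overline{c}\gamma|$, completing the proof. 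I expect the remaining steps to be routine; the bookkeeping of the mate and the final algebraic reduction is where the real work lies.
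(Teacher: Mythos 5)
First, a structural point: the paper does not prove Theorem \ref{t1.1} at all --- it records the statement from \cite{kz} ``for a ready reference'' --- so there is no internal proof to compare yours against, and your attempt must be judged on its own merits. On those merits, your skeleton is the right one and its first half is essentially correct: the one-step computation $(zf)^{+}=zf^{+}+\kappa$ with $\kappa=\langle f,S^*b\rangle_b/\overline{a(0)}$, resting on the identity $(S^*b)^{+}=-S^*a$, correctly yields the rank-one defect $T^*T-I=|a(0)|^{-2}\,S^*b\otimes S^*b$ (this is exactly the paper's Proposition \ref{p2}); the comparison lemma faithfully converts concavity into $T^*v=\lambda v$ with $|\lambda|\le 1$ and $2$-isometricity into $|\lambda|=1$, where $v=S^*b$; and for $|\lambda|<1$ the identification of adjoint eigenvectors with multiples of $k_{\bar\lambda}^{b}$ (legitimate here because polynomials are dense in $\H(b)$ for nonextreme $b$) plus your clearing-of-denominators computation does force $b(z)=(c+\gamma z)/(1-\beta z)$ with $|\beta|<1$. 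The case $|\lambda|=1$ genuinely requires Sarason's boundary-kernel/angular-derivative theory, which you only gesture at; that is citable, but it is a real ingredient, not a footnote.

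There are, however, two genuine gaps. First, in the converse direction everything hangs on the unproved assertion that the consistency equation $\bar w=(\beta+\overline{b(w)}\gamma)/(1-\overline{b(w)}c)$ has a solution with $|w|\le 1$; ``membership of $b$ in the ball forces $|w|\le1$'' is precisely what must be shown, and nothing in your sketch delivers it. The clean repair avoids fixed points entirely: from $T_{\bar b}S^*=S^*T_{\bar b}$ and injectivity of $T_{\bar a}$ one gets $(S^*h)^{+}=S^*(h^{+})$, whence $T^*h=S^*h+\frac{h^{+}(0)}{a(0)}S^*b$ for all $h\in\H(b)$; since $v=(\gamma+c\beta)/(1-\beta z)$ satisfies $S^*v=\beta v$ and $v^{+}=-S^*a$, this gives at once $T^*v=\bigl(\beta-\frac{a'(0)}{a(0)}\bigr)v$, and with the Fej\'er--Riesz mate $a(z)=(\rho-\sigma z)/(1-\beta z)$ (the conditions (i)--(iii) in the paper's Proposition \ref{T1}) the eigenvalue is $\lambda=\sigma/\rho$, so outerness of $a$, i.e.\ $\rho\ge|\sigma|$, yields $|\lambda|\le1$ and hence concavity; note that $\beta-a'(0)/a(0)$ is exactly the quantity the paper manipulates in Proposition \ref{T1}. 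Second, the endgame you defer as ``routine'' is where the stated identity lives, and carried out it does not produce the identity as printed: $|\lambda|=1$ means $\rho=|\sigma|$, which with $\rho^2+|\sigma|^2=1+|\beta|^2-|c|^2-|\gamma|^2$ and $\rho|\sigma|=|\beta+\overline{c}\gamma|$ is equivalent to $1+|\beta|^2-|c|^2-|\gamma|^2=2|\beta+\overline{c}\gamma|$ --- off by a factor $2$ from the displayed condition. (Sanity check: the symbols $b=\gamma z/(1-\beta z)$ with $|\beta|+|\gamma|=1$ of Corollary \ref{c1} do induce $2$-isometries, satisfy the version with the $2$, and violate the printed one; so the transcription in Theorem \ref{t1.1} is itself faulty, and your promise that routine simplification ``collapses'' to it could not have been kept.) Relatedly, your own reduction shows $v=0$ (constant nonextreme $b$) makes $T$ an isometry, hence a $2$-isometry, yet no parameters then satisfy the identity, so the constant case needs separate treatment in any correct statement. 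In sum: right strategy and a sound first half, but the proposal stops short of the two steps that actually decide the theorem, and one of its promissory notes is unredeemable as written.
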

	\begin{corollary}[Chevrot, Guillot, Ransford \cite{cgr}]\label{c1}
		Let $\mu$ be a finite positive measure on $\T$ and let $b$ be in the unit ball of $\h^\infty$. Then $D(\mu)=\H(b)$, with equality of norms, if and only if $$\mu=c\delta_\lambda \ \mbox{and} \ b(z)=\frac{\gamma z}{1-\beta z},$$ where $\beta\neq0,\ |\beta|+|\gamma|=1, \ c=\frac{|\gamma|^2}{|\beta|}$ and $\lambda=\frac{\overline{\beta}}{|\beta|}$.
	\end{corollary}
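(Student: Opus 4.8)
The plan is to treat both $D(\mu)$ and $\H(b)$ as reproducing kernel Hilbert spaces and reduce the isometric identity $D(\mu)=\H(b)$ to the coincidence of two pieces of data: the value at the origin and the defect quadratic form of the shift. The starting observation is the elementary local identity $D_\zeta(zf)=D_\zeta(f)+|f(\zeta)|^2$ for the local Dirichlet integral $D_\zeta$, which integrates in $\zeta$ to
\begin{equation*}
\langle (M_z^*M_z-I)f,f\rangle_{D(\mu)}=\int_{\T}|f|^2\,d\mu .
\end{equation*}
Two consequences are used repeatedly: $M_z$ is automatically a $2$-isometry on every $D(\mu)$ (since $|z|=1$ on $\T$, the right-hand side is shift invariant), and $\mu$ is recovered from the defect form through $\langle(M_z^*M_z-I)z^j,z^k\rangle=\widehat{\mu}(k-j)$, so that $D(\mu)=D(\nu)$ isometrically forces $\mu=\nu$.

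For sufficiency, given $\mu=c\delta_\lambda$ and $b(z)=\gamma z/(1-\beta z)$ with $|\beta|+|\gamma|=1$ and $\beta=|\beta|\bar\lambda$, I would first produce the Pythagorean mate. A direct check using $|\beta|+|\gamma|=1$ shows $|a|^2+|b|^2=1$ on $\T$ for
\begin{equation*}
a(z)=\frac{\sqrt{|\beta|}\,(1-\bar\lambda z)}{1-\beta z},
\end{equation*}
which is outer with a single boundary zero at $\lambda$. The heart of the argument is to identify the Sarason mate $f^{+}$ (defined by $T_{\bar b}f=T_{\bar a}f^{+}$, with $\|f\|_{\H(b)}^2=\|f\|_{\h^2}^2+\|f^{+}\|_{\h^2}^2$). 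Writing $Df=(f-f(\lambda))/(z-\lambda)$ and using $\bar\beta=|\beta|\lambda$, the boundary computations
\begin{equation*}
\bar a\,(Df)=\frac{\sqrt{|\beta|}\,(f-f(\lambda))}{z-\bar\beta},\qquad \bar b\,f=\frac{\bar\gamma f}{z-\bar\beta}
\end{equation*}
differ, after scaling by $\bar\gamma/\sqrt{|\beta|}$, only by $-\bar\gamma f(\lambda)/(z-\bar\beta)$, a function supported on strictly negative Fourier modes; applying the Riesz projection yields $f^{+}=(\bar\gamma/\sqrt{|\beta|})\,Df$. Hence $\|f^{+}\|^2=(|\gamma|^2/|\beta|)\,D_\lambda(f)=c\,D_\lambda(f)$, so $\|f\|_{\H(b)}^2=\|f\|_{\h^2}^2+c\,D_\lambda(f)=\|f\|_{D(c\delta_\lambda)}^2$, and density of polynomials upgrades this to $D(c\delta_\lambda)=\H(b)$ with equality of norms.

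For necessity I argue in reverse. Since $D(\mu)=\H(b)$, the shift is a $2$-isometry, so the dichotomy of Theorem~\ref{t1.1} forces $b(z)=(c_0+\gamma z)/(1-\beta z)$ with $|\beta|<1$. Matching kernels at the origin, where $k_0^{D(\mu)}\equiv 1$ (constants have vanishing Dirichlet integral) while $k_0^{b}=1-\overline{b(0)}b$, gives $b(0)=c_0=0$. The case $\beta=0$ is excluded because $b=\gamma z$ makes $\|z^n\|_{\H(b)}^2$ eventually constant in $n$, incompatible with the linear growth $\|z^n\|_{D(\mu)}^2=1+n\mu(\T)$ of a genuine Dirichlet space; and the $2$-isometry constraint of Theorem~\ref{t1.1} (equivalently, the requirement that the mate place its zero on $\T$ rather than strictly outside $\overline{\D}$) pins down $|\beta|+|\gamma|=1$. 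With $b$ of the stated form, the sufficiency computation identifies $\H(b)=D(c\delta_\lambda)$ with $c=|\gamma|^2/|\beta|$ and $\lambda=\bar\beta/|\beta|$; comparing with $D(\mu)=\H(b)$ and invoking uniqueness of the measure yields $\mu=c\delta_\lambda$.

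The main obstacle is the identification of the Pythagorean mate and the ensuing Sarason-mate computation: everything hinges on the boundary case $|\beta|+|\gamma|=1$, which is exactly what places the single zero of $a$ on $\T$ at the point $\lambda$ and collapses $f^{+}$ to a scalar multiple of the difference quotient $Df$; away from this case $a$ acquires a zero outside $\overline{\D}$, $M_z$ becomes concave but not $2$-isometric, and the associated measure migrates into the open disk. A secondary point requiring care is the clean extraction of the relations $|\beta|+|\gamma|=1$, $c=|\gamma|^2/|\beta|$, $\lambda=\bar\beta/|\beta|$, for which the defect-form identity above provides a transparent and self-contained route independent of the precise algebraic form of the constraint in Theorem~\ref{t1.1}.
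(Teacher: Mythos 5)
Your proposal is correct in substance, but it takes a genuinely different route from the paper. The paper obtains the corollary as a short specialization of its main Theorem~\ref{t1}: setting $|\lambda|=1$ there gives $|A_\lambda|^2=\frac{|\alpha|^2}{2}\bigl(2+|\alpha|^2-|\alpha|\sqrt{4+|\alpha|^2}\bigr)$, the identity $|A_\lambda|+|B_\lambda|=1$ is then verified algebraically, and the relations are inverted to recover $c=|A_\lambda|^2/|B_\lambda|$ and $\lambda=\overline{B_\lambda}/|B_\lambda|$; all of the analytic work (Aleman's model, the finite-rank defect characterization of Proposition~\ref{l1}, the Cauchy-kernel computation with the Pythagorean mate) is inherited from the proof of Theorem~\ref{t1} in sections~\ref{secA}--\ref{secC}. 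You instead re-prove the Chevrot--Guillot--Ransford statement from scratch: for sufficiency, your mate $a(z)=\sqrt{|\beta|}\,(1-\bar\lambda z)/(1-\beta z)$ does satisfy $|a|^2+|b|^2=1$ on $\T$ precisely when $|\gamma|=1-|\beta|$, and your boundary computation is sound --- the discrepancy $\bar\gamma f(\lambda)/(z-\bar\beta)$ carries only negative Fourier modes since $|\bar\beta|<1$, so the Riesz projection yields $f^{+}=(\bar\gamma/\sqrt{|\beta|})\,(f-f(\lambda))/(z-\lambda)$ and hence $\|f\|_{\H(b)}^2=\|f\|_{\h^2}^2+c\,D_\lambda(f)$ on polynomials, which density upgrades to equality of the spaces. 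For necessity, the kernel matching at the origin ($k_0^{D(\mu)}\equiv 1$ versus $1-\overline{b(0)}b$), the growth comparison $\|z^n\|^2_{D(\mu)}=1+n\mu(\T)$ against the eventually constant $\|z^n\|^2_{\H(\gamma z)}$, and the recovery of $\mu$ from the defect moments are all correct. What your approach buys is a self-contained, elementary proof with a transparent reason for each constraint; what the paper's route buys is that the corollary falls out of the general $\overline{\D}$-supported theorem with no additional function theory.

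One step deserves a flag. You propose to extract $|\beta|+|\gamma|=1$ from ``the $2$-isometry constraint of Theorem~\ref{t1.1}''; taken literally, the constraint as printed there, $1+|\beta|^2-|c|^2-|\gamma|^2=|\beta+\overline{c}\gamma|$, gives with $c=0$ the relation $|\gamma|^2=1-|\beta|+|\beta|^2$, not $(1-|\beta|)^2$ --- indeed the data $c=0$, $|\gamma|=1-|\beta|$ satisfy $1+|\beta|^2-|\gamma|^2=2|\beta|$, so that constraint cannot be used as stated. Your parenthetical fallback --- that $2$-isometry is equivalent to the mate placing its zero on $\T$ --- is the right criterion: writing $a=(\rho-\sigma z)/(1-\beta z)$ with $\rho>0$, $\rho\,\overline{\sigma}\cdot$(phase)$=\beta$ in modulus $\rho|\sigma|=|\beta|$ and $\rho^2+|\sigma|^2=1+|\beta|^2-|\gamma|^2$, the condition $\rho=|\sigma|$ yields exactly $|\gamma|=1-|\beta|$. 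But in your write-up this equivalence is asserted rather than proved. It does follow in one line from your own defect-form identity: on $D(\mu)$ with $\mu$ on $\T$ the moments $\|z^{n+1}\|^2-\|z^n\|^2$ are constant in $n$, while in $\H(b)$ one computes $\|z\|_b^2-\|1\|_b^2=|\gamma|^2/\rho^2$ and $\|z^2\|_b^2-\|z\|_b^2=|\gamma|^2|\sigma|^2/\rho^4$, forcing $|\sigma|^2=\rho^2$. You should include that computation explicitly rather than lean on the printed algebraic constraint.
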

In this paper, we obtain an analogue of Theorem \ref{t1.1} for completely hyperexpansive operators. Further, we also obtain a version of Corollary \ref{c1} for the Dirichlet space $D(\mu)$, where the measure $\mu$ is supported on the closed unit disk. In the process, we also prove a generalization of \cite[Theorem~6.1]{sc}, extending the result to a completely hyperexpansive operator.  \\ 
The main result proved in this paper is stated below:  
 	\begin{theorem}\label{t1} Let $\mu$ be a finite, positive, Borel measure supported on $\overline{\D}$ and let $b\in \h^\infty$ be nonextreme with $\|b\|_\infty\leq 1$.
 	Then $D(\mu)=\H(b)$ with equality of norms, if and only if $\mu=|\alpha|^2\delta_\lambda  ,\ \mbox{for some } \alpha\in\C ,\ \lambda\in\overline{\D}$ and $b(z)=\frac{A_\lambda z}{1-B_\lambda z}$ where $ A_\lambda$ and $B_\lambda$ are constants depending on $\alpha$ and $\lambda$ such that 
 	$$|A_\lambda|^2=\begin{cases}
 		\frac{|\alpha|^2}{2|\lambda|^2}\left((1+|\alpha|^2+|\lambda|^2)-\sqrt{(1+|\alpha|^2+|\lambda|^2)^2-4|\lambda|^2}\right), & \mbox{ if} \ \lambda\neq 0\\
 		\displaystyle\frac{1}{1+|\alpha|^2}, & \mbox{ if} \ \lambda= 0
 	\end{cases}$$ and $$ B_\lambda= \frac{|A_\lambda|^2\overline{\lambda}}{|\alpha|^2}.$$	
 \end{theorem}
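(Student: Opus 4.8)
The plan is to reduce the whole statement to a single equality of reproducing kernels and then to carry out two explicit computations, one on each side. Since both $D(\mu)$ and $\H(b)$ are reproducing kernel Hilbert spaces of analytic functions on $\D$, the assertion that $D(\mu)=\H(b)$ with equality of norms is equivalent, by the uniqueness of reproducing kernels, to the single identity
$$k^\mu_w(z)=\frac{1-\overline{b(w)}\,b(z)}{1-\overline w z}\qquad(z,w\in\D),$$
where $k^\mu_w$ is the reproducing kernel of $D(\mu)$ and the right-hand side is the de Branges--Rovnyak kernel of $\H(b)$. Thus the proof splits into computing $k^\mu_w$ for a point mass $\mu=|\alpha|^2\delta_\lambda$, computing the right-hand side for $b(z)=A_\lambda z/(1-B_\lambda z)$, matching the two (the \emph{if} direction), and finally showing that a general $\mu$ is forced to be a single atom (the \emph{only if} direction).

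For the kernel of $D(|\alpha|^2\delta_\lambda)$ I would work from the difference-quotient form of the norm coming from Aleman's model, $\|f\|^2_{D(\mu)}=\|f\|^2_{\h^2}+|\alpha|^2\|Q_\lambda f\|^2_{\h^2}$ with $Q_\lambda f=(f-f(\lambda))/(z-\lambda)$. A short computation gives the clean adjoint identity $Q_\lambda^{*}Q_\lambda f=\dfrac{z\,(f-f(\lambda))}{(1-\overline\lambda z)(z-\lambda)}$, so that the relation $k^\mu_w=(I+|\alpha|^2Q_\lambda^{*}Q_\lambda)^{-1}S_w$, where $S_w(z)=(1-\overline w z)^{-1}$ is the Szeg\H{o} kernel, becomes the first-order functional equation
$$k^\mu_w(z)+|\alpha|^2\,\frac{z\,\big(k^\mu_w(z)-k^\mu_w(\lambda)\big)}{(1-\overline\lambda z)(z-\lambda)}=\frac{1}{1-\overline w z}.$$
Solving this for $k^\mu_w(z)$ produces a rational kernel whose denominator is $q(z)(1-\overline w z)$ with $q(z)=-\overline\lambda z^{2}+(1+|\lambda|^2+|\alpha|^2)z-\lambda$; the still-undetermined constant $k^\mu_w(\lambda)$ is then fixed by requiring $k^\mu_w$ to be analytic on $\D$, i.e. that the numerator vanish at the unique zero $z_1$ of $q$ lying in $\D$ (the two zeros of $q$ have product of modulus one, so exactly one is interior).

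With both kernels available, the \emph{if} direction matches $k^\mu_w$ against the de Branges--Rovnyak kernel of $b(z)=A_\lambda z/(1-B_\lambda z)$, which one computes directly to be
$$\frac{1-B_\lambda z-\overline{B_\lambda}\,\overline w+(|B_\lambda|^2-|A_\lambda|^2)\,\overline w z}{(1-\overline w z)(1-B_\lambda z)(1-\overline{B_\lambda}\,\overline w)}.$$
Comparing the interior and exterior zeros of the two denominators forces $z_1=\overline{B_\lambda}$ and identifies the second zero of $q$ as $1/B_\lambda$; reading off the constant term and the coefficient of $z$ in $q$ then yields simultaneously the relation $B_\lambda=|A_\lambda|^2\overline\lambda/|\alpha|^2$ and the scalar constraint $\overline\lambda\,\overline{B_\lambda}+\overline\lambda/B_\lambda=1+|\lambda|^2+|\alpha|^2$. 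Substituting the former into the latter and clearing denominators produces the quadratic $|\lambda|^2t^{2}-|\alpha|^2(1+|\alpha|^2+|\lambda|^2)\,t+|\alpha|^4=0$ in $t=|A_\lambda|^2$, whose two roots have product $|\alpha|^4/|\lambda|^2$; the associated values of $|B_\lambda|=t|\lambda|/|\alpha|^2$ therefore multiply to $1$, and the hypothesis $\|b\|_\infty\le1$ (equivalently $z_1\in\D$) selects the smaller root, which is the minus sign in the stated formula. The case $\lambda=0$, where $q$ is linear and $b(z)=A_0z$, is handled by a direct computation of the corresponding kernels.

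For the converse, evaluating the kernel identity at $w=0$ gives the first constraint: since constants have vanishing local Dirichlet integral, the constant function $1$ is the reproducing kernel of $D(\mu)$ at $0$, so $k^\mu_0\equiv1$, and equating with $1-\overline{b(0)}b(z)$ forces $b(0)=0$. Because $M_z$ on $D(\mu)$ is completely hyperexpansive for every positive $\mu$, the shift on $\H(b)$ is completely hyperexpansive, so the nonextreme analogue of Theorem \ref{t1.1} proved earlier pins $b$ down to the single-pole form $A_\lambda z/(1-B_\lambda z)$; feeding this back through the kernel identity and the explicit formula for $k^\mu_w$ then recovers $\mu$ as the single atom $|\alpha|^2\delta_\lambda$ with $\lambda=\overline{B_\lambda}\,|\alpha|^2/|A_\lambda|^2$. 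I expect the crux of the whole argument to be exactly this collapse of a general finite positive measure on $\overline{\D}$ to one atom: the reduction to reproducing kernels and the rational-kernel matching are explicit, if lengthy, whereas excluding multi-atomic or continuous $\mu$ genuinely relies on the finiteness of the defect rank together with the single-pole rigidity coming from the completely hyperexpansive analogue of Theorem \ref{t1.1}. A secondary technical point is the boundary case $\lambda\in\T$, where $Q_\lambda f$ need not lie in $\h^2$ and the adjoint identity must be read through the Richter--Sundberg local Dirichlet integral rather than a naive $\h^2$ computation.
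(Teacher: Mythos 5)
Your reduction to equality of reproducing kernels is sound, and I checked your computational skeleton: the adjoint identity $Q_\lambda^*Q_\lambda f=\frac{z(f-f(\lambda))}{(1-\overline\lambda z)(z-\lambda)}$ is correct for $\lambda\in\D$, the polynomial $q(z)=-\overline\lambda z^2+(1+|\lambda|^2+|\alpha|^2)z-\lambda$ does govern the kernel of $D(|\alpha|^2\delta_\lambda)$ with exactly one root in $\D$ (the roots multiply to $\lambda/\overline\lambda$ and $q\neq0$ on $\T$ for $\alpha\neq0$), and your quadratic $|\lambda|^2t^2-|\alpha|^2(1+|\alpha|^2+|\lambda|^2)t+|\alpha|^4=0$ in $t=|A_\lambda|^2$, with the smaller root selected by $|B_\lambda|<1$, reproduces the theorem's constants. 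This is, however, a genuinely different route from the paper's. For the sufficiency direction the paper never computes a kernel: it feeds $b$ through Propositions \ref{l1}, \ref{T1} and \ref{p2} to see that $T=M_z|_{\H(b)}$ is cyclic, analytic, completely hyperexpansive with rank-one defect, invokes Aleman's representation theorem to get $\H(b)=D(\mu')$ with equality of norms for \emph{some} $\mu'$, and then uses the already-proved necessity direction to force $\mu'=\mu$. For the necessity direction the paper also avoids solving for the kernel of $D(\mu)$: it tests the norm equality only on Cauchy kernels $k_w$, using $D_\lambda(k_w)=\|\frac{\overline w}{1-\overline w\lambda}k_w\|_{\h^2}^2$ and $\|k_w\|_b^2=\|k_w\|_{\h^2}^2+\frac{1}{1-|w|^2}\frac{|b(w)|^2}{|a(w)|^2}$, which yields $\phi=b/a=\alpha z/(1-\overline\lambda z)$ and then recovers $(b,a)$ from the boundary relation $|b|^2=|p|^2/(|p|^2+|q|^2)$ a.e.\ on $\T$. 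Your kernel-matching is heavier but more self-contained and produces the reproducing kernel of $D(|\alpha|^2\delta_\lambda)$ as a byproduct; the paper's route is shorter because Aleman's model and the Pythagorean-mate lemma absorb the computation.

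Two soft spots deserve naming. First, the collapse of a general finite positive $\mu$ on $\overline\D$ to a single atom: you correctly identify this as the crux and correctly attribute it to finiteness of the defect rank, but you do not supply the argument. In the paper this is precisely Proposition \ref{l1} (rank-$k$ defect iff $\mu$ is a sum of $k$ point masses), whose proof is nontrivial — it uses the matrix decomposition of a completely hyperexpansive operator, the moment identity $\int_{\overline\D}z^n\bar z^m\,\mathrm{d}\mu=\langle\Delta_{M_z}M_z^n1,M_z^m1\rangle$, and Stone--Weierstrass — combined with Proposition \ref{p2} giving rank one on the $\H(b)$ side. If you may cite these, your converse closes; otherwise it is the largest unproven step in your write-up. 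Second, your kernel derivation as written only covers $\lambda\in\D$: for $\lambda\in\T$ the multiplier $z/(1-\overline\lambda z)$ is unbounded, $Q_\lambda^*Q_\lambda$ degenerates to a double pole at $\lambda$, and the functional equation must be interpreted in the form sense, essentially redoing Sarason's computation for local Dirichlet spaces. Since the boundary case is not peripheral — $|\lambda|=1$ is exactly Corollary \ref{c1}, the Chevrot--Guillot--Ransford result the theorem is meant to recover — flagging it as ``secondary'' undersells it; note that the paper's Cauchy-kernel computation handles $\lambda\in\overline\D$ uniformly, since $\frac{k_w-k_w(\lambda)}{z-\lambda}=\frac{\overline w}{1-\overline w\lambda}k_w$ holds for boundary $\lambda$ with no extra work.
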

\noindent We obtain Corollary \ref{c1} as a special case of Theorem \ref{t1}.\\
\indent In the remainder of this section,  we set up the notation as well as the context necessary in the sequel. In section \ref{secA}, we prove that the defect operator of a multiplication operator $M_z$ on a Dirichlet space $D(\mu)$, where $\mu$ is supported on the closed unit disk, is of finite rank if and only if $\mu$ is finitely supported on the closed unit disk. In section \ref{secB}, we prove a generalization of Theorem \ref{t1.1} and finally, capitalizing on the results in sections \ref{secA} and \ref{secB}, we shall prove the main theorem in section  \ref{secC}. 
\\\indent For a complex Hilbert space $\cal H$, let $\cal B(\cal H)$ denote the $C^*$ algebra of bounded linear operators on $\cal H.$ An operator $T\in  \cal B(\cal H)$ is called \textit{cyclic} if there exists a vector $f\in \cal H$ such that $\vee\{T^nf:n\geq 0\}=\cal H.$  We say that $T$ is \textit{analytic} if $\cap_{n\geq 0}T^n\cal H=\{0\}.$  
	 An operator $T\in  \cal B(\cal H)$ is a \textit{$2$-isometry} if $$I-2T^*T+T^{*^2}T^2=0.$$ 
		An operator $T\in  \cal B(\cal H)$ is  a\textit{ $k$-hyperexpansive} if $$\sum_{i=0}^{n}(-1)^i{n \choose i} T^{*^i}T^{i}\leq 0 \ , \ \forall\ 1\leq n\leq k.$$
		An operator $T\in  \cal B(\cal H)$ is  \textit{completely hyperexpansive} if $$\sum_{i=0}^{n}(-1)^i{n \choose i} T^{*^i}T^{i}\leq 0 \ , \ \forall n\geq 1.$$

For an excellent account of the basic properties of $2$- isometries, the reader is referred to \cites{rich, rich1, rich2, rich3}.   The theory of completely hyperexpansive operators has been developed in \cites{ale, aa1, vms}.
	In what follows, we need to recall the definitions of two special types of Hilbert spaces of holomorphic function viz. the Dirichlet space and the de Branges-Rovnyak space. 		
	We shall denote the open unit disk in the complex plane by $\D$ and the unit circle by $\T.$ Let $\h^2$ denote the classical Hardy space of analytic functions on $\D$ having square integrable Taylor coefficients and $\h^\infty$ denote the space of all bounded analytic functions on $\D$.

	We shall be dealing with the Dirichlet type space $D(\mu)$ where the measure $\mu$ is supported on the closed unit disk $\overline{\D}.$ Aleman points out the delicacy in defining the local Dirichlet integrals for the points $\zeta\in \T$ (see \cite[Proposition~1.7, Chapter~IV]{ale}). Indeed, if  $\zeta \in \D,$ then the local Dirichlet integral of a function $f\in \H^2$ is defined as  
	$$ D_{\zeta}(f)=\left\|\frac{f-f(\zeta)}{z-\zeta }\right\|_{\h^2}^2 = \frac{1}{2\pi}\int_0^{2\pi} \left|\frac{f(e^{it})-f(\zeta)}{e^{it}-\zeta}\right|^2dt.$$ 
	If $\zeta \in \T,$ then the local Dirichlet integral is defined as 
	$$ D_{\zeta}(f)=\left\|\frac{f-f(\zeta)}{z-\zeta }\right\|_{\h^2}^2$$ if the nontangential limit $f(\zeta) $ of $f$ at $\zeta$ exists and $D_{\zeta}(f)=\infty $ otherwise. 
	
	As pointed out in \cite{ale}, the necessity of defining the local Dirichlet integrals for the boundary points, and thereby extending the support of  measure to the closed unit disk is forced by the operator theoretic considerations. Indeed, with this type of Dirichlet space, one can incorporate the case of multiplication operator $M_z$ being a $2$-isometry (see \cite[Theorem~1.10, Chapter~IV]{ale}).   Following \cite[Definition 1.8]{ale}, we recall the definition of  Dirichlet space under consideration for a ready reference. \\
		\indent	Let $\mu$ be a finite, non-negative measure supported on $\overline{\D}.$ \textit{The Dirichlet space $D(\mu)$}, consists of those functions $f\in \h^2$ with the property that $\int D_{\zeta}(f)d\mu(\zeta)< \infty .$   
	\noindent The Dirichlet space $D(\mu)$ is a reproducing kernel Hilbert space with $$ \|f\|^2=\|f\|_{\h^2}^2+\int D_{\zeta}(f)\mathrm{d}\mu(\zeta). $$ Further, as $\mu$ is finite, the multiplication operator $M_z$ is bounded. In fact, it turns out that $M_z$ on $D(\mu)$ is completely hyperexpansive and if $T\in \cal B(\cal H)$ is a completely hyperexpansive operator, then there exists a unique nonnegative measure $\mu$ supported on the closed unit disk $\overline{\D}$ such that $\cal H= D(\mu ) $ with the equality of norms and $T$ is unitarily equivalent to the operator $M_z$ on $D(\mu)$  (see \cite[Theorem 2.5, Chapter IV]{ale}). Further, $M_z$ is a $2$-isometry if and only if $\mu$ is supported on $\T$ (see \cite[Theorem 1.10(ii), Chapter IV ]{ale}). 
	
	We now turn our attention to the definition of a de Branges Rovnyak space. 	
	
	\indent Following \cite{vol2}, recall that 
			\textit{the de Branges-Rovnyak space $\H(b)$} associated with a symbol $b\in\h^\infty, \|b\|_\infty\leq 1$ is the image of $\h^2$ under the operator $(I-T_bT_{\overline{b}})^\frac{1}{2}$ endowed with the inner product $$\langle(I-T_bT_{\bar{b}})^\frac{1}{2}f \ , \ (I-T_bT_{\bar{b}})^\frac{1}{2}g \ \rangle_b\ = \ \langle f, g\rangle_{\h^2} \ , \ f,g\in ker((I-T_bT_{\bar{b}})^\frac{1}{2})^\perp$$ where $\langle \cdot\rangle_{\h^2}$ is the usual inner product on $\h^2$ and $T_b$ denotes the Toeplitz operator with respect to $b$.
	
For a detailed survey of de Branges-Rovnyak spaces, the reader is referred to \cite{ball, vol1, vol2, sar1}. 

     Throughout this paper, we choose a symbol $b$ to be a nonextreme point in the unit ball of $\h^\infty$. The choice of a nonextreme symbol $b$ ensures following important results which are useful in the sequel: 
     \begin{enumerate}
     	\item The polynomials are dense in $\H(b)$ (see \cite[Section 23.4]{vol2}) 
     	\item The multiplication operator $M_z$ on $\H^2$ leaves $\H(b)$ invariant (see \cite[Proposition 4]{kz}).
     	\item There exists a unique outer function $a$ such that $a(0)>0$ and $|a(\zeta)|^2+|b(\zeta)|^2=1 ,\ \mbox{a.e on }\T$. Such a function is called the \textit{pythagorean mate of $b$} and $(b,a)$ are said to form a \textit{pair}. Further, $$f\in \H(b)\iff T_{\overline{b}}f\in T_{\overline{a}}\h^2$$ and $T_{\overline{a}}\h^2$ is dense in $\H(b)$ (see \cite[pg 24-25]{sar1}).
     \end{enumerate} 

	
	\section{Cyclic, Analytic, Completely Hyperexpansive Operators With Finite Rank Defect Operator }\label{secA}
		In this section, we provide a characterization for the measure $\mu$, so that the multiplication operator $M_z$ on the Dirichlet space $D(\mu)$, has defect operator of finite rank. At this stage, we refer the readers to the analogous result \cite[Theorem 6.1]{sc}, proved in the context of a $2$-isometry. It turns out that a matrix decomposition of a completely hyperexpansive operator, as described in \cite{vms}, turns out to be useful in proving the characterization.   
	\begin{proposition}\label{l1}
		Let $\mu$ be a finite positive Borel measure supported on $\overline{\D}$ and let $\Delta_{M_z}=M_z^*M_z-I$ be the defect operator of the multiplication operator on the Dirichlet  space $D(\mu)$. Then the following are equivalent:
		\begin{enumerate}
			\item[(i)] $\Delta_{M_z}$ is of rank $k$
			\item[(ii)] There exist constants $c_i>0 , 1\leq i\leq k$ and $\zeta_i\in\overline{\D}, 1\leq i\leq k$ such that $\mu=\sum_{i=0}^k c_i\delta_{\zeta_i}$
		\end{enumerate} 
	\end{proposition}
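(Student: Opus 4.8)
The plan is to establish the equivalence by first understanding how the defect operator $\Delta_{M_z}=M_z^*M_z-I$ acts on $D(\mu)$ in terms of the local Dirichlet integrals, and then to translate a rank condition into a support condition on $\mu$. The key computational observation is that for $f\in D(\mu)$, the quantity $\langle \Delta_{M_z}f,f\rangle = \|M_zf\|^2-\|f\|^2$ can be evaluated explicitly using the norm formula $\|f\|^2=\|f\|_{\h^2}^2+\int D_\zeta(f)\,d\mu(\zeta)$. Since multiplication by $z$ is an isometry on $\h^2$ (the shift), the Hardy-space parts cancel, and one is left with a difference involving the local Dirichlet integrals of $zf$ and of $f$. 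The standard identity $D_\zeta(zf)-D_\zeta(f)=|f(\zeta)|^2$ (which follows from the definition of $D_\zeta$ together with the computation $\|(zf-\zeta f(\zeta))/(z-\zeta)\|_{\h^2}^2$ expanded appropriately) should yield
$$\langle \Delta_{M_z}f,f\rangle=\int_{\overline{\D}}|f(\zeta)|^2\,d\mu(\zeta).$$
This is the crucial bridge: the defect quadratic form is precisely integration of $|f|^2$ against $\mu$.

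First I would prove the direction $(ii)\Rightarrow(i)$, which is the easier one. If $\mu=\sum_{i=1}^k c_i\delta_{\zeta_i}$, then the identity above gives $\langle \Delta_{M_z}f,f\rangle=\sum_{i=1}^k c_i|f(\zeta_i)|^2$. Each map $f\mapsto f(\zeta_i)$ is a bounded linear functional on the reproducing kernel Hilbert space $D(\mu)$, represented by the kernel vector $k_{\zeta_i}$. Hence $\Delta_{M_z}=\sum_{i=1}^k c_i\,k_{\zeta_i}\otimes k_{\zeta_i}$, a sum of $k$ rank-one positive operators, so its rank is at most $k$. To see the rank is exactly $k$ one checks that the kernel vectors $\{k_{\zeta_i}\}$ are linearly independent (distinct points in a RKHS where point evaluations are distinct functionals), which forces the rank to equal $k$.

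For the converse $(i)\Rightarrow(ii)$, the plan is to show that if the quadratic form $f\mapsto\int_{\overline{\D}}|f(\zeta)|^2\,d\mu(\zeta)$ has finite rank $k$, then $\mu$ must be supported on finitely many points. I would argue by contradiction: if $\mu$ had infinite support (or were not purely atomic), one could select $k+1$ distinct points $\zeta_0,\dots,\zeta_k$ in the support and, invoking the density of polynomials in $D(\mu)$ together with an interpolation argument, produce $k+1$ functions whose images under $\Delta_{M_z}$ (equivalently, whose evaluation patterns) are linearly independent, contradicting rank $k$. Here the matrix decomposition of a completely hyperexpansive operator from \cite{vms}, which the authors flag as the central tool, presumably provides the structural handle: it should express $\Delta_{M_z}$ concretely enough that finite rank forces the representing measure to be a finite atomic combination, with the atoms recovered as the points carrying the rank-one pieces and the weights $c_i$ as the associated masses.

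The main obstacle I anticipate is the converse direction, and specifically the boundary subtlety that Aleman emphasizes: for $\zeta\in\T$ the local Dirichlet integral $D_\zeta(f)$ is defined only when the nontangential limit exists and is $\infty$ otherwise, so point evaluation at boundary atoms is not automatically the clean bounded functional it is for interior points. Ensuring that the identity $\langle\Delta_{M_z}f,f\rangle=\int|f(\zeta)|^2\,d\mu(\zeta)$ holds uniformly and that finite rank genuinely precludes a continuous or infinitely-supported part of $\mu$ on $\overline{\D}$ — rather than merely on $\D$ — will require care; this is exactly where I expect the decomposition from \cite{vms} to do the heavy lifting, by reducing the finite-rank hypothesis to a finite-dimensionality statement about the measure-theoretic data and thereby ruling out any non-atomic component.
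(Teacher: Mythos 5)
Your key identity $\langle \Delta_{M_z}f,f\rangle = \int_{\overline{\D}}|f|^2\,\mathrm{d}\mu$ is correct (it is Aleman's Proposition 1.6, the same starting point the paper uses), and your direction (ii)$\Rightarrow$(i) is essentially the paper's: the paper shows $\overline{\mathrm{ran}(\Delta_{M_z})}$ equals the span of the kernel vectors $\kappa_{\zeta_i}$, which is your rank-one decomposition in different clothing (though for the linear independence of the kernels you should invoke Lagrange interpolation polynomials, which lie in $D(\mu)$, rather than the non-argument that the evaluations are ``distinct functionals''). The genuine gap is in (i)$\Rightarrow$(ii). As written it is a plan, not a proof, and it has two concrete defects. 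First, ``select $k+1$ distinct points of the support and produce functions with linearly independent evaluation patterns'' does not parse as stated: points of $\mathrm{supp}\,\mu$ need not be atoms, and on a non-atomic part of $\mu$ the individual evaluations $f\mapsto f(\zeta)$ --- at boundary points only nontangential limits, defined merely $\mu$-a.e. --- are not objects your rank hypothesis says anything about; the hypothesis only controls the quadratic form, i.e.\ the restriction map $D(\mu)\to L^2(\mu)$. Second, deferring the ``heavy lifting'' to the decomposition of \cite{vms} on the grounds that it ``presumably'' forces finite atomic data is a placeholder, not an argument. In the paper this is exactly where all the labor sits: $M_z$ is triangularized over $\ker(\Delta_{M_z})\oplus\overline{\mathrm{ran}(\Delta_{M_z})}$, the operator $D^{1/2}WD^{-1/2}$ is shown to be a contraction on the $k$-dimensional space $\mathrm{ran}(\Delta_{M_z})$, eigendata $\zeta_i\in\overline{\D}$ are extracted, the moments are matched via $\int_{\overline{\D}} z^n\bar{z}^m\,\mathrm{d}\mu=\sum_i\zeta_i^n\bar{\zeta_i}^m|\beta_i|^2$, and one concludes by density of polynomials and Stone--Weierstrass. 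None of this is reproduced or replaced in your sketch.

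Ironically, your own identity completes the hard direction elementarily, with no need for \cite{vms} at all: since $\Delta_{M_z}\geq 0$, its kernel is exactly $\{f\in D(\mu):\int_{\overline{\D}}|f|^2\,\mathrm{d}\mu=0\}$, and rank $k$ means this kernel has codimension $k$ in $D(\mu)$. Hence the $k+1$ monomials $1,z,\dots,z^k$ are linearly dependent modulo the kernel, so some nonzero polynomial $p$ of degree at most $k$ satisfies $\int_{\overline{\D}}|p|^2\,\mathrm{d}\mu=0$; as $p$ is continuous on $\overline{\D}$, the measure $\mu$ is carried by the at most $k$ zeros of $p$, i.e.\ $\mu=\sum_{i=1}^m c_i\delta_{\zeta_i}$ with $m\leq k$, and your already-proved direction (ii)$\Rightarrow$(i) then forces $m=k$. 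Had you executed this step, you would have a correct proof genuinely shorter than the paper's; as submitted, however, the (i)$\Rightarrow$(ii) direction stands as a gap.
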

	\begin{proof}
	
		\indent Suppose that $\Delta_{M_z}$ is of rank k.\\
		It is known that $M_z$ on $D(\mu)$ is cyclic, analytic and completely hyperexpansive (\cite{ale}). Using the decomposition theorem for completely hyperexpansive operators (\cite[Proposition 3.1]{vms}), we have a decomposition for $M_z$ as follows:\\
		$M_z=\begin{bmatrix}
			V & E\\ 0 & W\\
		\end{bmatrix}$ defined on $\ker(\Delta_{M_z})\oplus\overline{ran(\Delta_{M_z})}$\\
		where \begin{enumerate}
			\item[(a)] $V$ is an isometry
			\item[(b)] $V^*E=0$
			\item[(c)] If $D=E^*E+W^*W-I$, then $D\geq 0$ and $\ker(D)=\{0\}$
			\item[(d)] Letting $M_W(T)=W^*TW$ we have $(I-M_W)^{m-1}(D)\geq 0, \\ 1\leq m\leq k $
		\end{enumerate}
		We note that $\Delta_{M_z}=\begin{bmatrix}
			0 & 0\\ 0 & D\\
		\end{bmatrix}$
		where $D$ is invertible and self- adjoint.\\
	Now, letting $m=2$ in (d) above, we have \begin{equation}\label{E1}
			D\geq W^*DW
		\end{equation}
		Further, as $D$ is invertible, the inequality  (\ref{E1}) yields,
		\begin{eqnarray*}
			(D^{\frac{1}{2}}WD^{-\frac{1}{2}})^*(D^{\frac{1}{2}}WD^{-\frac{1}{2}}) &=& D^{-\frac{1}{2}} W^*DWD^{-\frac{1}{2}} \\
			&\leq & D^{-\frac{1}{2}} D D^{-\frac{1}{2}}\ = \ I
		\end{eqnarray*}
		Therefore , $(D^{\frac{1}{2}}WD^{-\frac{1}{2}})$ is a contraction. Now, let $B=\{v_1,v_2,\cdots v_k\}$ be an orthonormal basis of ${ran(\Delta_{M_z})}$ comprising of eigen vectors of $D^{\frac{1}{2}}WD^{-\frac{1}{2}}$ corresponding to the eigen values $\zeta_i , \ 1\leq i\leq k$,  where each $\zeta_i\in\overline{\D}$.\\
		Now referring to \mbox{(\cite[Proposition (1.6)]{ale})}, we have   \begin{equation}\label{rf1}
			\|zf\|^2_{D(\mu)}-\|f\|^2_{D(\mu)}=\int_{\overline{\D}} |f|^2\ \mathrm{d}\mu \ , \ \forall f\in D(\mu) .  
		\end{equation} By polarization identity, one obtains,
		\begin{equation}\label{rf2}
			\langle zf,zg\rangle- \langle f,g\rangle=\int_{\overline{\D}} f\bar{g}\ \mathrm{d}\mu \ , \ \forall f,g\in D(\mu).
		\end{equation}
		So, for $m, n\geq0$,
		\begin{eqnarray*}
			\langle \Delta_{M_z} M_z^n1, z^m\rangle &=& \langle z^{n+1} , z^{m+1}\rangle-\langle z^n , z^m \rangle\\&=& \int_{\overline{\D}} z^n\bar{z}^m\ \mathrm{d}\mu.
		\end{eqnarray*}
		That is \begin{equation}\label{E3}
			\langle \Delta_{M_z} M_z^n1, M_z^m1\rangle=\int_{\overline{\D}} z^n\bar{z}^m\ \mathrm{d}\mu.
		\end{equation}
		 Now note that, $\{D^{-\frac{1}{2}}v_1, D^{-\frac{1}{2}}v_2, \cdots , D^{-\frac{1}{2}}v_k \}$ also forms a basis for ${ran(\Delta_{M_z})}$.\\ Let $P$ be the projection onto ${ran(\Delta_{M_z})}$. Therefore, $P1=\sum_{i=1}^{k}\beta_i D^{-\frac{1}{2}}v_i$ for some $\beta_i\in\C , 1\leq i\leq k$. 
		Therefore, for $m, n\geq0$,
		\begin{eqnarray*}
			\langle \Delta_{M_z} M_z^n1, z^m\rangle &=&	\langle \Delta_{M_z} z^n, M_z^m1\rangle.\\
				&=& 	\langle DW^nP1, W^mP1\rangle 
		\end{eqnarray*}
	which implies
		\begin{equation}\label{E5}
			\langle \Delta_{M_z} M_z^n1, M_z^m1\rangle=\sum_{i=1}^{k}\zeta_i^n\bar{\zeta_i}^m|\beta_i|^2.
		\end{equation}
		Equations (\ref{E3}) and (\ref{E5}) imply that
		$$ \int_{\overline{\D}} z^n\bar{z}^m\ \mathrm{d}\mu=\sum_{i=1}^{k}\zeta_i^n\bar{\zeta_i}^m|\beta_i|^2.$$
		Since polynomials are dense in $D(\mu)$ (\cite[Corollary 7.3.4]{primer}), by Stone-Weiertrass's theorem, we have $$\mu=\sum_{i=1}^{k}|\beta_i|^2\delta_{\zeta_i}.$$
		\indent 	Conversely, suppose that there exist $c_i>0 $ and $\zeta_i\in\overline{\D} , 1\leq i \leq k$ such that $\mu=\sum_{i=1}^{k}c_i\delta_{\zeta_i}$.\\
		As $\mu$ is supported at $\zeta_i\in\overline{\D}, \  1\leq i\leq k$, for every $f\in D(\mu)$,
		$$\|f\|^2_{D(\mu)}=\|f\|^2_{\h^2}+\int_{\overline{\D}}\left\|\ \frac{f-f(\zeta)}{z-\zeta}\right\|_{\h^2}^2\mathrm{d}\mu(z)=\|f\|^2_{\h^2}+\sum_{i=1}^{k}c_i\left\|\ \frac{f-f(\zeta_i)}{z-\zeta_i}\right\|_{\h^2}^2<\infty.$$
		This implies that $$f\in D(\mu) \iff \left\|\ \frac{f-f(\zeta_i)}{z-\zeta_i}\right\|_{\h^2}^2<\infty , \ 1\leq i \leq k.$$
		Since $D(\mu)$ is a reproducing kernel Hilbert space, point evaluations must be bounded.
		Thus by the reproducing property, there exist linearly independent vectors $\kappa_{\zeta_1} ,\kappa_{\zeta_2}, \cdots \kappa_{\zeta_k}$in  $D(\mu)$ such that $$\langle f,\kappa_{\zeta_i} \rangle=f(\zeta_i) \ , \ \mbox{for every} \ f\in D(\mu), 1\leq i\leq k$$
		We now show that $\Delta_{M_z}$ is of rank $k$.\\
		Let $F=\bigvee \{\kappa_{\zeta_i}| 1\leq i\leq k\}$.\\
		Claim: $\overline{ran(\Delta_{M_z})}=F$ \\
		Now, for $f\in D(\mu)$ consider,
		\begin{eqnarray*}
			\langle \Delta_{M_z}f,f \rangle &=& \|M_zf\|^2_{D(\mu)}-\|f\|^2_{D(\mu)}\\
			&=&\int_{\overline{\D}} |f|^2\ \mathrm{d}\mu\\
			&=& \sum_{i=1}^{k}c_i|f(\zeta_i)|^2\\
			&=& \sum_{i=1}^{k}c_i|\langle f, \kappa_{\zeta_i}\rangle|^2.
		\end{eqnarray*}
		If $f\in F^{\perp}$ then $\langle \Delta_{M_z}f,f \rangle =0$. Since the operator $\Delta_{M_z} \geq 0$,  $\Delta_{M_z}f=0$ i.e $f\in \ker(\Delta_{M_z})$.
		Therefore, $F^\perp\subseteq \ker(\Delta_{M_z})$ or $\overline{ran(\Delta_{M_z})}\subseteq F$. \\ Similarly, if $f\in \ker(\Delta_{M_z})$ then $\Delta_{M_z}f=0$ implies $\sum_{i=1}^{k}c_i|\langle f, \kappa_{\zeta_i}\rangle|^2=0$. That is, $\langle f, \kappa_{\zeta_i}\rangle=0$ for every $1\leq i\leq k\implies f\in F^\perp$. In other words, $\ker(\Delta_{M_z})\subseteq F^\perp$ or $F\subseteq \overline{ran(\Delta_{M_z})}$. Therefore, $\overline{ran(\Delta_{M_z})}=F $ and hence $\Delta_{M_z}$ must be of rank $k$.
	\end{proof}
		
	\section{ Completely Hyperexpansive Shift on a  $\H(b)$ space}\label{secB}
	
	Let $b$ be a non-extreme point of the unit ball of $\h^\infty$ and let $S$ denote the operator of  multiplication by $z$  on $\h^2$. The space $\H(b)$ is invariant under $S$. Let $T$ denote the restriction of $S$ to ${\H(b)}.$ In other words, $T$ is the multiplication by $z$ on the space $\H(b)$.\\ \indent We now obtain the necessary and sufficient conditions on $b$, so that $T$ is a completely hyperexpansive operator.
	\begin{proposition}{\label{T1}}
		Let $b$ be a nonextreme function in the unit ball of $H^{\infty}$. Then the restriction of the shift operator to the de Branges Rovnyak space $H(b)$ is completely hyperexpansive if and only if $b(z)=\frac{c+\gamma z}{1-\beta z}$ with $c,\gamma ,\beta\in\C$ and $|\beta|<1$.
			\end{proposition}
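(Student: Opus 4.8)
The plan is to characterize exactly when $T = S|_{\H(b)}$ is completely hyperexpansive by relating it back to the Dirichlet-space picture. The key observation is that Theorem~\ref{t1.1} already tells us that $T$ is \emph{concave} (i.e.\ $2$-hyperexpansive) precisely when $b(z)=\frac{c+\gamma z}{1-\beta z}$ with $|\beta|<1$, so our candidate family of symbols is forced on us from one direction. Since every completely hyperexpansive operator is in particular concave, the necessity of the stated form of $b$ is \emph{immediate} from Theorem~\ref{t1.1}: if $T$ is completely hyperexpansive then it is concave, hence $b$ has the asserted Möbius-type shape. Thus the entire content of the proposition lies in the converse, namely showing that \emph{every} such $b$ actually yields a completely hyperexpansive (not merely concave) shift.

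For the converse I would argue via the model theory recalled in the introduction. The strategy is: given $b(z)=\frac{c+\gamma z}{1-\beta z}$ with $|\beta|<1$, I first want to identify $\H(b)$ with a Dirichlet-type space $D(\mu)$ for an explicit positive measure $\mu$ supported on $\overline{\D}$. The cleanest route is to compute the defect form directly. Using the nonextreme hypothesis and the Pythagorean mate $a$ (item (3) of the introduction), together with the known formula $\|Tf\|_b^2 - \|f\|_b^2$ for the shift on $\H(b)$, I would show that this quadratic form is represented by integration against a finite positive measure $\mu$ on $\overline{\D}$; concretely one expects $\mu$ to be a single point mass $c'\,\delta_\lambda$ with $\lambda = \overline{\beta}/|\beta|$ (when $\beta \ne 0$) and $\lambda$ on or inside $\T$ according to the arithmetic relating $c,\gamma,\beta$. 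Once $\H(b)=D(\mu)$ with equality of norms is established, the desired conclusion is \emph{automatic}: by Aleman's theorem (\cite[Theorem~2.5, Chapter~IV]{ale}), the operator $M_z$ on \emph{any} $D(\mu)$ with $\mu\geq 0$ supported on $\overline{\D}$ is completely hyperexpansive. So complete hyperexpansivity comes for free the moment we know $b$ produces a genuine Dirichlet space with nonnegative measure.

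The technical heart, therefore, is the explicit identification $\H(b) = D(\mu)$ for the Möbius symbols, and in particular verifying that the representing measure is \emph{positive}. I would carry this out by testing the defect form $\langle \Delta_T z^n, z^m\rangle$ on monomials, exactly as in the proof of Proposition~\ref{l1}, and matching the resulting moment sequence $\int_{\overline{\D}} z^n \bar z^m \, d\mu$ against a point mass. The rank-one nature of $\Delta_T$ (which follows because $b$ has a single pole, so the relevant defect space is one-dimensional) reduces the moment-matching to a single complex computation determining the location $\lambda$ and mass $c'$ of the atom in terms of $c,\gamma,\beta$; positivity of the mass is then a direct consequence of $|\beta|<1$.

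The step I expect to be the main obstacle is pinning down the measure's location $\lambda$ inside $\overline{\D}$ and confirming $\mu\geq 0$ without circularity—that is, computing the defect form intrinsically from the $\H(b)$ structure (via $T_b$, $T_{\overline a}$, and the reproducing kernel $k^b_\lambda$) rather than assuming the Dirichlet representation we are trying to establish. Concretely, I anticipate having to evaluate the reproducing kernel of $\H(b)$ for the rational $b$ and read off both the atom and its weight from its rank-one defect; the bookkeeping relating $(c,\gamma,\beta)$ to $(c',\lambda)$ is where the real work sits. Everything downstream—complete hyperexpansivity—then follows formally from Aleman's model, so no separate verification of the full family of inequalities $\sum_{i=0}^n (-1)^i \binom{n}{i} T^{*^i}T^i \leq 0$ is needed.
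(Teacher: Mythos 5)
Your necessity direction is fine and matches the paper: complete hyperexpansivity implies concavity, and Theorem \ref{t1.1} then forces the M\"obius form of $b$. The gap is in your converse. Your pivotal intermediate claim --- that every $b(z)=\frac{c+\gamma z}{1-\beta z}$ with $|\beta|<1$ yields $\H(b)=D(\mu)$ \emph{with equality of norms} for some positive $\mu$ on $\overline{\D}$ --- is false, and the paper's own Theorem \ref{t1} shows why: setting $w=0$ in the kernel identity (\ref{E9}) forces $b(0)=0$, whereas the present proposition allows $c\neq 0$. A concrete counterexample is $b\equiv c$ a nonzero constant ($\gamma=\beta=0$): then $\H(b)=\h^2$ as a set but with norm $\|f\|_b=(1-|c|^2)^{-1/2}\|f\|_{\h^2}$, so $T=S$ is an isometry (hence completely hyperexpansive), yet $\|1\|_b\neq 1=\|1\|_{D(\mu)}$ for \emph{every} $\mu$, so no isometric identification $\H(b)=D(\mu)$ exists. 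Your fallback --- settling for a unitary equivalence of $T$ with some $M_z$ on $D(\mu)$ --- is precisely Aleman's model theorem, whose \emph{hypothesis} is that $T$ is completely hyperexpansive; invoking it here is the circularity you flagged yourself, and your proposed escape (moment-matching the defect form) only pins down the first-order identity $\|zf\|_b^2-\|f\|_b^2=c'|f(\zeta)|^2$, which, as the constant-$b$ example shows, does not recover the full norm of the space (it determines norm differences, not $\|1\|_b$ or $\langle 1,z^k\rangle_b$).

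That said, your rank-one defect computation contains the germ of a correct and short argument, which is in substance what the paper does --- only without ever passing through $D(\mu)$. Since $\Delta_T=|a(0)|^{-2}\,S^*b\otimes S^*b$ (Proposition \ref{p2}), once one shows the higher-order forms factor through the concavity form, everything telescopes: if $\langle \Delta_T f,f\rangle_b$ behaves like $c'|f(\zeta)|^2$ with $t=|\zeta|^2\le 1$, the elementary identity $\sum_{i=0}^{n}(-1)^i\binom{n}{i}\frac{t^i-1}{t-1}=-(1-t)^{n-1}$ gives all the inequalities at once. The paper implements exactly this: it writes down the Pythagorean mate $a(z)=\frac{\rho-\sigma z}{1-\beta z}$ explicitly and proves by induction that the $n$-th hyperexpansivity form equals $\bigl(1-\bigl|\beta-\frac{a'(0)}{a(0)}\bigr|^2\bigr)^{n-2}$ times the form $\langle(I-2T^*T+T^{*^2}T^2)h,h\rangle_b$, which is $\le 0$ by Theorem \ref{t1.1}. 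So the repair is to replace your ``identify with $D(\mu)$, then cite Aleman'' step by this direct factorization of the defect forms inside $\H(b)$; as proposed, the argument does not go through for $c\neq 0$.
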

	\begin{proof}
		Suppose that $T$ is a complete hyperexpansion. Then, by definition, $T$ is a $k$-hyperexpansion for every $k\geq 1$. In particular, it is a $2$-hyperexpansion. Using the necessary and sufficient conditions for $T$ to be a $2$-hyperexpansion (Theorem \ref{t1.1}) , $T$ is $2$-hyperexpansive (or concave) if and only if $b$ has the form $b(z)=\frac{c+\gamma z}{1-\beta z}$ where $c,\gamma ,\beta\in\C$ and $|\beta|<1$.\\ \indent 
		Conversely, if \begin{equation*} b(z)=\frac{c+\gamma z}{1-\beta z}\end{equation*} where $c,\gamma ,\beta\in\C$ and $|\beta|<1$, then its pythagorean mate, $a(z)=\frac{\rho-\sigma z}{1-\beta z}$ where $\rho$ and $\sigma$ follow the conditions \begin{enumerate}
			\item[(i)] $\rho\geq |\sigma|$
			\item[(ii)] $\rho^2+|\sigma|^2=1+|\beta|^2-|c|^2-|\gamma|^2\ \mbox{and }\ \rho^2|\sigma|^2=|\beta+\overline{c}\gamma|^2$
			\item[(iii)] $\mbox{arg}(\rho)=\mbox{arg}(\beta+\overline{c}\gamma)$.
		\end{enumerate}
		
		We observe that for any $h\in\H(b)$,
		\begin{eqnarray*}
			\langle (I-3T^*T+3{T^*}^2T^2-{T^*}^3T^3)h , h\rangle_b &=&(1+\|b\|_b^2)\left(1-\left|\beta-\frac{a'(0)}{a(0)}\right|^2\right)\\ 
			& & \left\{|\langle h,{S^*b}^2\rangle_b|^2-2\mathrm{Re}\left(\bar{\frac{{a'(0)}}{a(0)}}\langle h,S^*b\rangle_b\overline{\langle h,{S^*}^2b\rangle_b}\right)\right.\\
			& &\left.+\left[\left|\frac{a'(0)}{a(0)}\right|^2-1\right]|\langle h,S^*b\rangle_b|^2\right\}\\
			&=&\left(1-\left|\beta-\frac{a'(0)}{a(0)}\right|^2\right)\langle (I-2T^*T+{T^*}^2T^2)h , h \rangle_b
		\end{eqnarray*}
		Inductively, it follows that, for every $n\geq 2$ and every $h\in\H(b)$,
		$$\left\langle\left(\sum_{i=0}^{n}(-1)^i{n \choose i} {T^*}^iT^{i}\right)h , h\right\rangle_b=\left(1-\left|\beta-\frac{a'(0)}{a(0)}\right|^2\right)^{n-2}\langle (I-2T^*T+{T^*}^2T^2)h , h \rangle_b$$
		Therefore, $T$ is completely hyperexpansive if $$\langle (I-2T^*T+{T^*}^2T^2)h , h \rangle_b\leq0 \ \mbox{or} \ 1-\left|\beta-\frac{a'(0)}{a(0)}\right|^2= 0.$$
		In either case, by appealing to Theorem \ref{t1.1}, we conclude that $T$ must be $2$-hyperexpansive and thus it follows that $T$ is completely hyperexpansive.
		
	\end{proof}
	\noindent We conclude this section by proving a result required in the proof of the main theorem of this paper.
	\begin{proposition}{\label{p2}}
		Let $b$ be a non-extreme point of unit ball of $\h^\infty$ and $a$ be its pythagorean mate. Then the defect operator $\Delta_{T}=T^*T-I$ is of rank 1 and its range is spanned by $S^*b$ and its non-zero eigen value equals $|a(0)|^{-2}\|S^*b\|_b^2$.
	\end{proposition}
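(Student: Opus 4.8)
The plan is to reduce the entire statement to the single quadratic-form identity
\begin{equation*}
\langle \Delta_T h, h\rangle_b=\|Th\|_b^2-\|h\|_b^2=\frac{1}{|a(0)|^2}\,|\langle h,S^*b\rangle_b|^2,\qquad h\in\H(b). \tag{$\star$}
\end{equation*}
Once ($\star$) is available, polarization gives $\Delta_T=|a(0)|^{-2}\langle\,\cdot\,,S^*b\rangle_b\,S^*b$; this is a positive, self-adjoint, rank-one operator (recall $S^*b\in\H(b)$ since $\H(b)$ is backward-shift invariant), its range is $\C\,S^*b$, and its unique nonzero eigenvalue, attained at the eigenvector $S^*b$, equals $|a(0)|^{-2}\|S^*b\|_b^2$, exactly as claimed. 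The rank is genuinely one precisely when $S^*b\neq 0$, i.e. when $b$ is nonconstant (the case of interest); for constant $b$ one has $\H(b)=\h^2$ and $T$ is an isometry. Thus everything hinges on establishing ($\star$).

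To prove ($\star$) I would work in Sarason's two-component model. Since $a$ is outer, $T_{\bar a}$ is injective, so to each $f\in\H(b)$ corresponds a unique $f^+\in\h^2$ with $T_{\bar b}f=T_{\bar a}f^+$, and the norm splits as $\|f\|_b^2=\|f\|_{\h^2}^2+\|f^+\|_{\h^2}^2$. As $S$ is isometric on $\h^2$, $\|Sf\|_{\h^2}=\|f\|_{\h^2}$, so ($\star$) is equivalent to the Hardy-space identity
\[
\|(Sf)^+\|_{\h^2}^2-\|f^+\|_{\h^2}^2=\frac{1}{|a(0)|^2}\,|\langle f,S^*b\rangle_b|^2.
\]
The crux is therefore a transformation rule for $f\mapsto f^+$ under multiplication by $z$. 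I would obtain this rule by a direct Toeplitz computation. The relation $T_{\bar b}f=T_{\bar a}f^+$ means $\bar b f-\bar a f^+$ lies in $\overline{z\h^2}$, say $\bar b f-\bar a f^+=\overline{zk}$ with $k\in\h^2$. Multiplying by $z$ gives $z\bar b f=z\bar a f^++\bar k$; applying the Riesz projection $P$ and using $P(z\bar b f)=T_{\bar b}(Sf)=T_{\bar a}(Sf)^+$, $P(z\bar a f^+)=T_{\bar a}(Sf^+)$ and $P(\bar k)=\overline{k(0)}$, one gets $T_{\bar a}\big((Sf)^+-Sf^+\big)=\overline{k(0)}$. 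Since $T_{\bar a}$ acts on constants by $T_{\bar a}(c)=c\,\overline{a(0)}$, this yields the clean rule
\[
(Sf)^+=Sf^+ +\frac{\overline{k(0)}}{\overline{a(0)}},
\]
a constant correction to $Sf^+$. Because $Sf^+=zf^+$ vanishes at the origin, it is orthogonal in $\h^2$ to that constant, and the norm telescopes to $\|(Sf)^+\|_{\h^2}^2-\|f^+\|_{\h^2}^2=|k(0)|^2/|a(0)|^2$.

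The remaining, and I expect most delicate, step is to identify the constant as $\overline{k(0)}=\langle f,S^*b\rangle_b$. Expanding in the model, $\langle f,S^*b\rangle_b=\langle f,S^*b\rangle_{\h^2}+\langle f^+,(S^*b)^+\rangle_{\h^2}$, this reduces to the auxiliary identity $(S^*b)^+=-S^*a$. That identity I would prove on its own by a short Toeplitz calculation: writing $S^*b=(b-b(0))/z$ and $S^*a=(a-a(0))/z$,
\[
T_{\bar b}(S^*b)+T_{\bar a}(S^*a)=P\!\left(\tfrac{|b|^2+|a|^2}{z}\right)-P\!\left(\tfrac{b(0)\bar b+a(0)\bar a}{z}\right),
\]
and the pythagorean relation $|a|^2+|b|^2=1$ a.e. (so the first projection is $P(1/z)=0$) together with the observation that $z^{-1}\overline{\big(\overline{b(0)}\,b+\overline{a(0)}\,a\big)}$ carries only strictly negative Fourier frequencies (so the second projection vanishes) forces $T_{\bar b}(S^*b)=T_{\bar a}(-S^*a)$. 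Substituting $(S^*b)^+=-S^*a$ and comparing with the Fourier coefficients of $k(0)$ read off from $zk=b\bar f-a\overline{f^+}$ gives $\overline{k(0)}=\langle f,S^*b\rangle_b$, completing ($\star$) and hence the proposition. The genuine difficulty throughout is the careful bookkeeping of the Riesz projection on products of analytic and co-analytic symbols, where $T_uT_v=T_{uv}$ fails; the failure term is precisely the rank-one constant that creates the defect.
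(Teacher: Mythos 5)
Your proposal is correct, but it is considerably more self-contained than the paper's proof, which disposes of the proposition in one line: it invokes the known identity $T^*T=I+|a(0)|^{-2}\,S^*b\otimes S^*b$ as a standard consequence of $(b,a)$ forming a pair, and then simply reads off the rank, the range, and the eigenvalue from the matrix of $\Delta_T$ relative to $\H(b)=\C S^*b\oplus(\C S^*b)^\perp$. You instead derive that identity from scratch in Sarason's two-component model: your transformation rule $(Sf)^+=Sf^+ +\overline{k(0)}/\overline{a(0)}$ is correct (the projection bookkeeping checks out, including $P(\bar k)=\overline{k(0)}$ and injectivity of $T_{\bar a}$ on account of $a$ being outer), your auxiliary identity $(S^*b)^+=-S^*a$ is the standard fact and your Fourier-frequency argument for it is sound, and the identification $\overline{k(0)}=\langle f,S^*b\rangle_{\h^2}-\langle f^+,S^*a\rangle_{\h^2}=\langle f,S^*b\rangle_b$ follows exactly as you say from conjugating $\bar b f-\bar a f^+=\overline{zk}$ and reading off the zeroth coefficient; polarization then upgrades the quadratic-form identity $(\star)$ to the operator identity. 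What the paper's route buys is brevity, at the cost of leaning on the literature; what yours buys is transparency about \emph{why} the defect is rank one --- it is precisely the constant-function correction term arising from the failure of $T_uT_v=T_{uv}$ --- plus a small sharpening the paper glosses over: you correctly note that the rank is one only when $S^*b\neq 0$, i.e.\ $b$ nonconstant, whereas for constant $b$ (which is still nonextreme) one has $\Delta_T=0$, so the proposition as stated silently excludes that degenerate case.
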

	\begin{proof}
		Since $(b,a)$ forms a pair, we have
		\begin{eqnarray*}
			T^*T&=&I+|a(0)|^{-2} S^*b\otimes S^*b\\
			\therefore \Delta_{T}=T^*T-I &=& |a(0)|^{-2} S^*b\otimes S^*b
		\end{eqnarray*}
	where  $(x\otimes y)$ denotes the operator $ (x\otimes y)(z)=\langle z,y\rangle_{b} x \ ,  \ z\in \H(b).$\\
		Hence the matrix of the operator $\Delta_{T}$ relative to the decomposition $\H(b)=\C S^*b\oplus(\C S^*b)^\perp$ is given by $$\begin{bmatrix}
			|a(0)|^{-2}\|S^*b\|_b^2 & 0\\
			0 & 0
		\end{bmatrix}$$
		which implies the required result.
	\end{proof}
	
	\section{Proof of Theorem \ref{t1}}\label{secC}
	This section is devoted to the proof of the main result of the paper viz Theorem \ref{t1}.\\
	\indent Let $b\in \h^\infty$ be nonextreme with $\|b\|_\infty\leq 1$ and let $\mu$ be a finite positive Borel measure supported on $\overline{\D}$. Let $T=M_z|_{\H(b)}$ denote the restriction of multiplication by $z$ operator to $\H(b)$ space.
\\\vskip0.1cm \noindent	\textbf{Proof of Theorem \ref{t1}:}
	\begin{proof}

		Suppose that $\H(b)=D(\mu)$ with equality of norms. 
		\\\indent As $M_z$  on $D(\mu)$ is cyclic, analytic and completely hyperexpansive, $T$ must be cyclic, analytic and completely hyperexpansive. Therefore, Theorem \ref{T1} implies that $b$ must be of the form \begin{equation*} b(z)=\frac{c+\gamma z}{1-\beta z}
		\end{equation*} where $c,\gamma ,\beta\in\C$ and $|\beta|<1$. \\ Similarly, Proposition \ref{p2} implies that the corresponding defect operator $\Delta_{T}$ has rank 1. Therefore, the corresponding defect operator $\Delta_{M_z}$ on $D(\mu)$ must also have rank 1. So, using Proposition \ref{l1} we conclude that $$\mu=|\alpha|^2\delta_\lambda\ , \ \mbox{for some} \ \alpha\in\C , \lambda\in\overline{\D}.$$ 
		Now, for Cauchy kernels $k_w(z)=\frac{1}{1-\overline{w}z}$ we have,\\
		$$D_\lambda(k_w)=\frac{1}{2\pi}\int_\T \left|\frac{k_w(z)-k_w(\lambda)}{z-\lambda}\right||\mathrm{d}z|=\left\| \frac{\overline{w}}{1-\overline{w}\lambda}k_w\right\|_{\h^2}^2$$
		Since,
		$$D_\mu(f)=\int_{\overline{\D}}D_\zeta(f)\mathrm{d}\mu(\zeta)$$ therefore we have,
		$$D_\mu(k_w)=|\alpha|^2\left\| \frac{\overline{w}}{1-\overline{w}\lambda}k_w\right\|_{\h^2}^2=\frac{|\alpha|^2|w|^2}{|1-\overline{\lambda}w|^2(1-|w|^2)}$$
		Similarly, $$\|k_w\|_b^2=\|k_w\|_{\h^2}^2+\|k_w^+\|_{\h^2}^2=\|k_w\|_{\h^2}^2+\frac{1}{(1-|w|^2)}\frac{|b(w)|^2}{|a(w)|^2}$$ 
		But as $\H(b)=D(\mu)$ with equality of norms we have,  \begin{equation}\label{E9} \frac{1}{(1-|w|^2)}\frac{|b(w)|^2}{|a(w)|^2}=\|k_w^+\|_{H^2}^2=D_\mu(k_w)=\frac{|\alpha|^2|w|^2}{|1-\overline{\lambda}w|^2(1-|w|^2)}\end{equation} \\ In particular, letting $w=0$ we  obtain $b(0)=0$. That is, $b(z)=\frac{\gamma z}{1-\beta z}$.\\
		
		\indent	Further , if $\phi=\frac{b}{a}$ , then equation (\ref{E9}) gives us $$|\phi(w)|^2=\frac{|\alpha|^2|w|^2}{|1-\overline{\lambda}w|^2}$$ As the result is true for every $w$ , we obtain $$\phi(z)=\frac{\alpha z}{1-\overline{\lambda}z}.$$
		
		Let $p(z)=\alpha z , \ q(z)=1-\overline{\lambda}z$. 
		We observe that $\phi$ is a Smirnov class function. Then, the pair $(b,a)$ can be obtained by the formulation, 
		$$|a|^2=\frac{|q|^2}{|p|^2+|q|^2} \mbox{ and}\ |b|^2=\frac{|p|^2}{|p|^2+|q|^2} \  \mbox{a.e on}\  \T\ \mbox{(see \cite[Lemma 23.27]{vol2})}.$$\\ Therefore,  
		\begin{eqnarray*}
			\frac{|\gamma|^2}{|1-\beta z|^2}&=&\frac{|\alpha|^2}{|1-\overline{\lambda}z |^2+|\alpha|^2|z|^2} \ , \ \mathrm{a.e \ on} \ \T
		\end{eqnarray*}
		\begin{eqnarray*}
			\therefore \ |\gamma|^2\left((1+|\lambda|^2+|\alpha|^2)+2\re(\overline{\lambda}z)\right)&=&|\alpha|^2\left((1+|\beta|^2)+2\re(\beta z)\right) ,\ \mathrm{a.e \ on} \ \T
		\end{eqnarray*}
		Substituting $z=1,\ -1, \ i,\ -i$ we obtain the following: 
		\\If $\lambda=0$ then
		$$|\gamma|^2=\frac{1}{1+|\alpha|^2}$$ and therefore $\beta=0$
		\\Similarly, if $\lambda\neq 0$, then
		$$|\gamma|^2=\frac{|\alpha|^2}{2|\lambda|^2}\left((1+|\alpha|^2+|\lambda|^2)\pm\sqrt{(1+|\alpha|^2+|\lambda|^2)^2-4|\lambda|^2}\right).$$ 
		But, $$|\gamma|^2=\frac{|\alpha|^2}{2|\lambda|^2}\left((1+|\alpha|^2+|\lambda|^2)+\sqrt{(1+|\alpha|^2+|\lambda|^2)^2-4|\lambda|^2}\right)$$ implies $|\beta|>1$.\\
		Therefore,
		$$|\gamma|^2=\frac{|\alpha|^2}{2|\lambda|^2}\left((1+|\alpha|^2+|\lambda|^2)-\sqrt{(1+|\alpha|^2+|\lambda|^2)^2-4|\lambda|^2}\right).$$ Hence $$\beta=\frac{\overline{\lambda}}{2|\lambda|^2}\left((1+|\alpha|^2+|\lambda|^2)-\sqrt{(1+|\alpha|^2+|\lambda|^2)^2-4|\lambda|^2}\right).$$
		
		\noindent Letting $A_\lambda=\gamma $ and $B_\lambda=\beta$ we obtain the required result.\\
		\indent Conversely, suppose that  $\mu=|\alpha|^2\delta_\lambda ,  \ \mbox{for some }\alpha\in\C ,\ \lambda\in\overline{\D}$ and $b(z)=\frac{A_\lambda z}{1-B_\lambda z}$ where $ A_\lambda$ and $B_\lambda$ are constants depending on $\alpha$ and $\lambda$ such that 
		$$|A_\lambda|^2=\begin{cases}
			\left(\frac{|\alpha|^2}{2|\lambda|^2}\left((1+|\alpha|^2+|\lambda|^2)-\sqrt{(1+|\alpha|^2+|\lambda|^2)^2-4|\lambda|^2}\right)\right) & \mbox{, if} \ \lambda\neq 0\\
			\left(\dfrac{1}{1+|\alpha|^2}\right) & \mbox{, if} \ \lambda= 0
		\end{cases}$$
		and $$ B_\lambda= \frac{|A_\lambda|^2\overline{\lambda}}{|\alpha|^2}.$$	 
		Then by Proposition \ref{l1}, the defect operator of $M_z$ on $D(\mu)$, $\Delta_{M_z}$ must be of rank 1.\\  Similarly, applying Proposition \ref{T1} and Proposition \ref{p2}, we conclude that the operator $T$ must be cyclic, analytic, completely hyperexpansive and its defect operator, $\Delta_{T}$ must be of rank 1. 
		
		Recall that, by Aleman's result \cite[Theorem 2.5]{ale}, T must be unitarily equivalent to the multiplication by $z$ operator on $D(\mu')$ for some non-negative, Borel measure $\mu'$ supported on $\overline{\D}$ and $\H(b)=D(\mu')$ with equality of norms. But, previous part of this theorem implies that $\mu$ and $\mu'$ must be equal.
	\end{proof}
	The above result provides a formulation for computing the non-extreme symbol $b$, given a positive, point mass measure $\mu$ on $\overline{\D}$, so that $D(\mu)=\H(b)$ with equality of norms. We use the following examples to illustrate the construction.
	\begin{example}
		Suppose $\mu=\delta_0$ i.e $|\alpha|^2=1$ and $\lambda=0$. Then by the formula prescribed in the above theorem, one obtains 
		$A_\lambda=\frac{1}{\sqrt{2}}$ and $B_\lambda=0$. 
		
	\end{example}
	\begin{example}
		Suppose $\mu=\delta_{\frac{1}{2}}$ i.e $|\alpha|^2=1$ and $\lambda=\frac{1}{2}$. Then,\\ 
		$A_\lambda=\left(\dfrac{{9-\sqrt{65}}}{2}\right)^\frac{1}{2}$ and $B_\lambda=\dfrac{(9-\sqrt{65})}{4}$. 
		
	\end{example}
	\noindent We conclude this paper by deducing Corollary \ref{c1} from Theorem \ref{t1}.\\\vskip0.25cm
\noindent\textbf{Proof of Corollary \ref{c1}:}
	\begin{proof}
		Letting $|\lambda|=1$ in Theorem \ref{t1}, we obtain
		$$|A_\lambda|^2=\frac{|\alpha|^2}{2}\left(2+|\alpha|^2-|\alpha|\sqrt{(4+|\alpha|^2)}\right),$$ Similarly, $ B_\lambda= \frac{|A_\lambda|^2\overline{\lambda}}{|\alpha|^2}$	 implies $$ |B_\lambda|=\frac{|A_\lambda|^2}{|\alpha|^2}\neq0$$
		\\Now , 
		\begin{eqnarray*}
			&	|B_\lambda|+|A_\lambda|=1 \\
			\iff & \frac{|A_\lambda|^2}{|\alpha|^2}+|A_\lambda|=1\\
			\iff& |A_\lambda|^2=\frac{|\alpha|^2}{2}\left(2+|\alpha|^2\pm|\alpha|^2\sqrt{(4+|\alpha|^2)}\right)
		\end{eqnarray*} 
		Therefore,
		$$|\alpha|^2=\frac{|A_\lambda|^2}{|B_\lambda|}, \ \lambda=\frac{\overline{B_\lambda}}{|B_\lambda|}.$$
	This completes the proof.
	\end{proof}
	\subsection*{Acknowledgments:} 
	\begin{enumerate}
		\item The authors would like to thank Prof. Sameer Chavan (IIT, Kanpur) for several useful discussions throughout the preparation of this paper.
		\item The present work is carried out at the research centre at the Department of Mathematics, S. P. College, Pune, India (autonomous).
	\end{enumerate}
	
	
	\bibliographystyle{plain}	\bibliography{ACH_dBR5.bib}

\end{document}